\def\dom{\mathop{\mathrm{Dom}}\nolimits}
\def\im{\mathop{\mathrm{Im}}\nolimits}
\def\ker{\mathop{\mathrm{Ker}}\nolimits} 
\def\rank{\mathop{\mathrm{rank}}\nolimits}
\def\N{\mathbb N}
\newtheorem{theorem}{Theorem}
\newtheorem{lemma}[theorem]{Lemma}
\newtheorem{proposition}[theorem]{Proposition}
\date{}
\begin{document}

\title{On monoids of monotone partial transformations of a finite chain whose domains and ranges are intervals}

\author{Hayrullah Ay\i k, V\'\i tor H. Fernandes~and Emrah Korkmaz}

\maketitle

\begin{abstract} 
In this note, we consider the monoid $\mathcal{PIM}_{n}$ of all partial monotone transformations on a chain with $n$ elements whose domains and ranges are intervals and its submonoid $\mathcal{IM}_{n}$ constituted by the full transformations. 
For both of these monoids, our aim is to determine their cardinalities and ranks and define them by means of presentations. 
We also calculate the number of nilpotent elements of $\mathcal{PIM}_{n}$. 
\end{abstract}

\noindent{\small\it Keywords: \rm Transformations, monotone, rank, presentations.}  

\medskip 

\noindent{\small 2010 \it Mathematics subject classification: \rm 20M20, 20M05, 20M10.}  

\section*{Introduction} 

We begin by defining the monoids that are the focus of this paper. For a positive integer $n$,  let $\Omega_{n}$ be a chain with $n$ elements, say $\Omega_{n}=\{1<2<\cdots<n\}$. It is usual to denote by $\mathcal{PT}_{n}$ the monoid of all partial transformations on $\Omega_{n}$ (under composition) and by $\mathcal{T}_{n}$ the submonoid of $\mathcal{PT}_{n}$ of all full transformations on $\Omega_{n}$. Notice that, we compose the transformations from left to right.

A transformation $\alpha \in \mathcal{PT}_{n}$ is called \emph{order-preserving} [\emph{order-reversing}] if, for all $x,y \in \dom(\alpha)$, $x < y$ implies $x\alpha\leqslant y\alpha$ [$x\alpha\geqslant y\alpha$], and is called \emph{monotone} if $\alpha$ is order-preserving or order-reversing. Clearly, the product of two order-preserving transformations or two order-reversing transformations is an order-preserving transformation and the product of an order-preserving transformation by an order-reversing transformation, or vice-versa, is an order-reversing transformation. Moreover, the product of two monotone transformations is monotone. 
We denote by $\mathcal{PO}_{n}$ [$\mathcal{O}_{n}$] the submonoid of $\mathcal{PT}_{n}$ [$\mathcal{T}_{n}$] of all order-preserving partial [full] transformations and by $\mathcal{PM}_{n}$ [$\mathcal{M}_{n}$] the submonoid of $\mathcal{PT}_{n}$ [$\mathcal{T}_{n}$] of all monotone partial [full] transformations. 

A subset $I$ of $\Omega_{n}$ is called an \emph{interval} of $\Omega_{n}$ if for all $x,y,z\in \Omega_{n}$,\, $x\leqslant y\leqslant z$ and $x,z\in I$ imply $y\in I$. We denote by $\mathcal{PIO}_{n}$ the submonoid of $\mathcal{PO}_{n}$ of all order-preserving partial transformations whose domain and image sets are both intervals of $\Omega_{n}$ and  by $\mathcal{IO}_{n}$ the submonoid $\mathcal{O}_{n}$ of all order-preserving transformations whose image sets are intervals of $\Omega_{n}$. 
Next, let us denote by $\mathcal{PIM}_{n}$ the subset of $\mathcal{PM}_{n}$ consisting of all monotone partial transformations whose domain and image sets are both intervals of $\Omega_{n}$. It is easy to show that $\mathcal{PIM}_{n}$ is a submonoid of $\mathcal{PM}_{n}$. Moreover, let 
$\mathcal{IM}_{n}$ be the submonoid of $\mathcal{PIM}_{n}$ of all monotone full transformations whose image sets are intervals of $\Omega_{n}$, 
i.e. $\mathcal{IM}_{n}=\mathcal{PM}_{n}\cap \mathcal{T}_{n}$.

\smallskip

Recall that the \textit{rank} of a finite monoid $M$, $\rank(M)$, is the minimum size of a generating set for $M$, i.e. 
$\rank(M)=\min\{\lvert X\rvert\mid X\subseteq M,\, \langle X\rangle=M\}$.
For a comprehensive background on semigroups and monoids, readers are referred to the textbook by Howie \cite{Howie:1995}.

\smallskip

It is well-known that $\mathcal{T}_{n}$ and $\mathcal{PT}_{n}$ have ranks $3$ and $4$, respectively. In \cite{Gomes&Howie:1992}, Gomes and
Howie showed that the ranks of the monoids $\mathcal{O}_{n}$ and $\mathcal{PO}_{n}$ are $n$ and
$2n-1$, respectively. In \cite{1Fernandes&Gomes&Jesus:2005}, Fernandes et al. showed that the ranks of $\mathcal{PM}_{n}$ and $\mathcal{M}_{n}$ are  $n+1$ and $\lceil \frac{n}{2}\rceil+1$, respectively, where $\lceil x \rceil$ denotes the least integer greater than or equal to a real number $x$. 

In $1962$,  A\u\i zen\v stat   \cite{Aizenstat:1962} gave a monoid presentation for $\mathcal{O}_{n}$ with $2n-2$ generators and $n^{2}$ relations. 
In the same year, Popova \cite{Popova:1962}  gave a monoid presentation for $\mathcal{PO}_{n}$ with $3n-2$ generators and $\frac{1}{2}(7n^{2}-n-4)$ relations 
(see also \cite{Fernandes:2002survey}). 
In $2005$, Fernandes et. al. \cite{Fernandes&Gomes&Jesus:2005} provided a monoid presentation for $\mathcal{PM}_{n}$ 
with $\lceil \frac{n}{2}\rceil+n$ generators and 
$\frac{1}{4}(7n^{2}+2n+\frac{3}{2}(1-(-1)^{n}))$ relations 
and a monoid presentation for $\mathcal{M}_{n}$ with $n$ generators and $\frac{1}{2}(n^{2}+n+2)$ relations.  
A significant amount of research (e.g. \cite{Bugay:2020,Fernandes&Santos:2019,Garba:1994,Gyudzhenov&Dimitrova:2006,Koppitz&Worawiset:2022,Li&Fernandes:2024,Yang:2000,Zhao&Fernandes:2015}) has extensively examined topics closely aligned with the focus of the present work.  

In \cite{Fernandes&Paulista:2023}, Fernandes and Paulista considered the monoid $\mathcal{IO}_n$.  
They showed that $\mathcal{IO}_{n}$ coincides with the monoid of all \emph{weak endomorphisms} of a a directed path with $n$ vertices. They also showed that the rank of $\mathcal{IO}_{n}$ is $n-1$. Building upon this work, Fernandes \cite{Fernandes:2024} gave a monoid presentation for  $\mathcal{IO}_{n}$ with $n-1$ generators and $\frac{1}{2}(3n^{2}-7n+4)$ relations. After then, in \cite{Ayik&al:2025}, Ay\i k et al. showed that the rank of $\mathcal{PIO}_{n}$ is $n+1$ and gave a monoid presentation  for $\mathcal{PIO}_{n}$  with $4n-4$ generators and $5n^{2}+3n-10$ relations.  

The present paper continues in the spirit of the research program outlined above. We give presentations for the monoid $\mathcal{PIM}_{n}$, 
in terms of  $4n-3$ generators and $5n^{2}+5n-10$ relations, and for the monoid $\mathcal{IM}_{n}$, in terms of $2n-1$ generators and $\frac{1}{2}(3n^{2}+n)$ relations. Moreover, we determine the cardinalities and ranks of these two monoids. In addition, we also characterize and count the nilpotent elements of $\mathcal{PIM}_{n}$.

\smallskip

We would like to acknowledge the use of computational tools, namely GAP \cite{GAP4}.

\section{Combinatorial and algebraic properties}

In this section, we collect some combinatorial and algebraic properties of the monoids $\mathcal{PIM}_{n}$ and $\mathcal{IM}_{n}$. 
It is worth recalling that the cardinalities of the monoids  $\mathcal{PIO}_{n}$ and $\mathcal{IO}_{n}$ have been calculated as  $(n+3)2^{n} -n^{2}-3n-2$ 
in \cite[Theorem 3]{Ayik&al:2025} and $(n+1)2^{n-2}$ in \cite[Theorem 2.6]{Fernandes&Paulista:2023} (see also \cite{Fernandes:2024}), respectively.

Let $\mathcal{PIM}_{n}^{r} = \{\alpha\in \mathcal{PIM}_{n}\mid \mbox{$\alpha$ is order-reversing}\}$ and 
$\mathcal{IM}_{n}^{r} = \mathcal{PIM}_{n}^{r} \cap \mathcal{T}_{n}$. Consider the permutation of order two 
$$
h= \left(\begin{array}{cccc}
	1 & 2 & \cdots   & n \\
	n & n-1 & \cdots & 1
\end{array}\right). 
$$ 
Observe that, for each $\alpha \in \mathcal{PIM}_{n}^{r}$ [$\alpha\in \mathcal{IM}_{n}^{r}$], it is clear that $\alpha h\in \mathcal{PIO}_{n}$ [$\alpha h\in \mathcal{IO}_{n}$]  and $\dom(\alpha) =\dom(\alpha h)$. Our first result follows.

\begin{theorem}\label{2} 
	For $n\geqslant 1$, $\lvert \mathcal{PIM}_{n}\rvert=(n+3)2^{n+1} -\frac{n^{3} +5n^{2}+ 12n+10}{2}$ and $\lvert \mathcal{IM}_{n} \rvert= (n+1)2^{n-1} -n$.
\end{theorem}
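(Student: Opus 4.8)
The plan is to prove both cardinality formulas by inclusion--exclusion based on the decomposition of a monotone transformation into its order-preserving and order-reversing parts. Since every element of $\mathcal{PIM}_n$ is either order-preserving or order-reversing, we have $\mathcal{PIM}_n=\mathcal{PIO}_n\cup\mathcal{PIM}_n^r$, whence
\begin{equation*}
	\lvert \mathcal{PIM}_n \rvert = \lvert \mathcal{PIO}_n \rvert + \lvert \mathcal{PIM}_n^r \rvert - \lvert \mathcal{PIO}_n \cap \mathcal{PIM}_n^r \rvert ,
\end{equation*}
and the analogous identity holds with $\mathcal{PIM}_n,\mathcal{PIO}_n,\mathcal{PIM}_n^r$ replaced by $\mathcal{IM}_n,\mathcal{IO}_n,\mathcal{IM}_n^r$. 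So the task splits into controlling the two ``pure'' counts and the size of the overlap, after which I substitute the values of $\lvert\mathcal{PIO}_n\rvert$ and $\lvert\mathcal{IO}_n\rvert$ recorded above.

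First I would dispose of the order-reversing part using the involution $h$. As noted just before the statement, right multiplication by $h$ sends $\mathcal{PIM}_n^r$ into $\mathcal{PIO}_n$ while preserving the domain; since $h^2=\id$, the map $\alpha\mapsto\alpha h$ is a bijection from $\mathcal{PIM}_n^r$ onto $\mathcal{PIO}_n$, its inverse being again right multiplication by $h$ (indeed $\beta h$ is order-reversing whenever $\beta$ is order-preserving, its image set is $(\im\beta)h$, an interval, and $(\beta h)h=\beta$). Hence $\lvert\mathcal{PIM}_n^r\rvert=\lvert\mathcal{PIO}_n\rvert$, and likewise $\lvert\mathcal{IM}_n^r\rvert=\lvert\mathcal{IO}_n\rvert$. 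This reduces each formula to $2\lvert\mathcal{PIO}_n\rvert$ (respectively $2\lvert\mathcal{IO}_n\rvert$) minus the overlap.

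The crux is therefore identifying and counting $\mathcal{PIO}_n\cap\mathcal{PIM}_n^r$, i.e. the transformations that are simultaneously order-preserving and order-reversing. If $\alpha$ has at least two points in its domain, then for $x<y$ in $\dom(\alpha)$ both $x\alpha\leqslant y\alpha$ and $x\alpha\geqslant y\alpha$ hold, forcing $x\alpha=y\alpha$, so $\alpha$ is constant; conversely every constant map, as well as the empty map, lies in the overlap. The only remaining membership constraint is that the domain be an interval, since a singleton image is automatically an interval. Thus $\mathcal{PIO}_n\cap\mathcal{PIM}_n^r$ consists exactly of the empty map together with the constant maps whose domain is an interval. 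Counting the latter by choosing a nonempty interval of $\Omega_n$ as domain --- there are $\frac{n(n+1)}{2}$ of them --- and an image point in $\Omega_n$, I obtain $\frac{n(n+1)}{2}\cdot n+1=\frac{n^{2}(n+1)}{2}+1$. Substituting this together with $\lvert\mathcal{PIO}_n\rvert=(n+3)2^{n}-n^{2}-3n-2$ into the identity yields the first formula after routine simplification. For $\mathcal{IM}_n$ the overlap $\mathcal{IO}_n\cap\mathcal{IM}_n^r$ is just the $n$ constant full maps, and with $\lvert\mathcal{IO}_n\rvert=(n+1)2^{n-2}$ the identity gives $2(n+1)2^{n-2}-n=(n+1)2^{n-1}-n$. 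The only step needing genuine care is the precise description of the overlap --- in particular remembering to include the empty map and to restrict to interval domains in the partial case --- as everything else is elementary arithmetic.
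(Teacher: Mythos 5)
Your proposal is correct and follows essentially the same route as the paper: the bijection $\alpha\mapsto\alpha h$ between $\mathcal{PIM}_n^r$ and $\mathcal{PIO}_n$ (resp. $\mathcal{IM}_n^r$ and $\mathcal{IO}_n$), inclusion--exclusion over the decomposition into order-preserving and order-reversing parts, and the identification of the overlap as the constant maps with interval domain (plus the empty map in the partial case). Your treatment is slightly more detailed than the paper's --- in particular in verifying that right multiplication by $h$ is its own inverse and in arguing why the overlap consists exactly of constants --- but the substance is identical.
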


\begin{proof}
Let us define a mapping $\phi: \mathcal{PIM}_{n}^{r} \rightarrow \mathcal{PIO}_{n}$ by $\alpha \phi= \alpha h$, for all $\alpha\in \mathcal{PIM}_{n}^{r}$. 
Clearly, $\phi$ is a bijection, whence $\lvert \mathcal{PIM}_{n}^{r} \rvert= \lvert \mathcal{PIO}_{n} \rvert$.  
Moreover, since $\mathcal{PIM}_{n}^{r} \cap \mathcal{PIO}_{n}$ consists only of constant mappings and there exist $\frac{n(n+1)}{2}$ non-empty intervals of $\Omega_{n}$, by including the empty transformation $0_{n}$, 
we get $\lvert \mathcal{PIM}_{n}^{r} \cap\mathcal{PIO}_{n} \rvert =\frac{n^{2}(n+1)}{2} +1$. 
Hence, from \cite[Theorem 3]{Ayik&al:2025}  it follows that 
$$\textstyle 
\lvert \mathcal{PIM}_{n} \rvert= 2\lvert \mathcal{PIO}_{n} \rvert -\frac{n^{2}(n+1)}{2}-1= (n+3)2^{n+1} -\frac{n^{3}+5n^{2}+12n+10}{2}.
$$ 
On the other hand, since $\phi$ is also a bijection from $\mathcal{IM}_{n}^{r}$ into $\mathcal{IO}_{n}$ and $\lvert \mathcal{IM}_{n}^{r} \cap \mathcal{IO}_{n} \rvert =n$,  from \cite[Theorem 2.6]{Fernandes&Paulista:2023} it follows that $\lvert \mathcal{IM}_{n}\rvert=2\lvert \mathcal{IO}_{n}\rvert -n=(n+1)2^{n-1} -n$.
\end{proof}

\smallskip 

Let $S$ be a semigroup with zero $0$. An element $s\in S$ is called \emph{nilpotent} if there exists a positive integer $k$ such that $s^{k}=0$. Let us denote by $N(S)$ the set of all nilpotent elements of $S$. 
Observe that, $N(S)$ might not be a subsemigroup of $S$. 

Obviously, $0_n\in\mathcal{PIM}_{n}$ and so $\mathcal{PIM}_{n}$ is a semigroup with zero $0_n$.  
In order to find the cardinality of $N(\mathcal{PIM}_{n})$,  we begin by providing a characterization of the nilpotent elements of $\mathcal{PIM}_{n}$ 
which belong to $\mathcal{PIM}_{n}^{r}$. 

\begin{lemma}\label{3}
Let $\alpha\in \mathcal{PIM}_{n}^{r}$. Then, $\alpha$ is nilpotent if and only if $\im(\alpha)\cap \dom(\alpha)=\emptyset$.
\end{lemma}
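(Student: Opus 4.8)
The plan is to restate nilpotency concretely and then treat the two implications separately, with essentially all the work in one direction. Since $\mathcal{PIM}_{n}$ has zero $0_{n}$ (the empty map) and we compose left to right, a point $x$ lies in $\dom(\alpha^{k})$ exactly when $x,x\alpha,\dots,x\alpha^{k-1}$ are all defined; in particular $\dom(\alpha^{2})=\{x\in\dom(\alpha)\mid x\alpha\in\dom(\alpha)\}$ and the sets $\dom(\alpha^{k})$ form a descending chain. Thus $\alpha$ is nilpotent if and only if $\dom(\alpha^{k})=\emptyset$ for some $k$, i.e. no point has an infinite forward orbit. The easy (\emph{if}) direction is then immediate: if $\im(\alpha)\cap\dom(\alpha)=\emptyset$, then for every $x\in\dom(\alpha)$ we have $x\alpha\in\im(\alpha)$, hence $x\alpha\notin\dom(\alpha)$, so $\dom(\alpha^{2})=\emptyset$ and $\alpha^{2}=0_{n}$.

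For the (\emph{only if}) direction I would argue by contraposition: assuming $\dom(\alpha)\neq\emptyset$ and $\im(\alpha)\cap\dom(\alpha)\neq\emptyset$, I will exhibit a point whose entire forward orbit stays inside $\dom(\alpha)$, so that $\dom(\alpha^{k})\neq\emptyset$ for all $k$ and $\alpha$ is not nilpotent. Write $\dom(\alpha)=\{a,a+1,\dots,b\}$ and $\im(\alpha)=\{c,\dots,d\}$, both intervals. Since $\alpha$ is order-reversing, the least argument attains the greatest value and vice versa, so $a\alpha=d$ and $b\alpha=c$; the overlap hypothesis then translates into $a\leqslant d$ and $c\leqslant b$. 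Introducing $g(x)=x\alpha-x$ on $\dom(\alpha)$, this gives $g(a)=d-a\geqslant 0$ and $g(b)=c-b\leqslant 0$, and $g$ is strictly decreasing because $\alpha$ is weakly decreasing while the identity is increasing.

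The heart of the proof is a discrete intermediate-value argument controlled by a ``$1$-Lipschitz'' property that the interval hypothesis forces: for consecutive points $x,x+1\in\dom(\alpha)$ one must have $x\alpha-(x+1)\alpha\in\{0,1\}$, since a drop of at least $2$ would skip a value which, by monotonicity, is attained nowhere, contradicting that $\im(\alpha)$ is an interval. Consequently $g$ decreases by exactly $1$ or $2$ at each step while passing from $g(a)\geqslant 0$ to $g(b)\leqslant 0$. Either $g$ takes the value $0$ at some $x^{*}$, giving a fixed point $x^{*}\alpha=x^{*}$, or it jumps from $g(u)=1$ to $g(u+1)=-1$, which forces $u\alpha=u+1$ and $(u+1)\alpha=u$, i.e. a $2$-cycle. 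In both cases the orbit ($\{x^{*}\}$ or $\{u,u+1\}$) lies in $\dom(\alpha)$ and is $\alpha$-invariant, so it witnesses an infinite forward orbit and $\alpha$ is not nilpotent.

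I expect the main obstacle to be exactly this $1$-Lipschitz step property together with ruling out that $g$ leaps over $0$ without producing a $2$-cycle: this is the precise point where the hypothesis that $\im(\alpha)$ is an interval (rather than an arbitrary image set) is indispensable. A little extra care is needed to confirm that the witnessing orbit actually lies within $[a,b]$, which follows from the strict sign information $g(a)\geqslant 0\geqslant g(b)$ pinning the sign change inside the domain (the boundary cases $g(a)=0$ or $g(b)=0$ themselves yield the fixed points $a$ or $b$).
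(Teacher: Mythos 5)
Your proof is correct, and it takes a genuinely different route from the paper's. The paper argues forward-invariance of the set $\dom(\alpha)\cap\im(\alpha)$: given $i\in\im(\alpha)\cap\dom(\alpha)$ with preimage $k$, it uses the fact that $[i,k]\alpha$ is the \emph{full} interval $[i,i\alpha]$ (this is where the interval-image hypothesis enters) together with a cardinality comparison $\lvert[i,i\alpha]\rvert\leqslant\lvert[i,k]\rvert$ to conclude $i\alpha\leqslant k$, hence $i\alpha\in\dom(\alpha)\cap\im(\alpha)$; iterating shows the orbit of $i$ never leaves the domain, so no power of $\alpha$ is the empty map. You instead prove a discrete intermediate-value theorem for $g(x)=x\alpha-x$: the interval-image hypothesis yields your $1$-Lipschitz step bound $x\alpha-(x+1)\alpha\in\{0,1\}$, so $g$ descends in steps of $1$ or $2$ from $g(a)=d-a\geqslant0$ to $g(b)=c-b\leqslant0$ and must either hit $0$ (a fixed point) or jump from $1$ to $-1$ (a $2$-cycle $u\mapsto u+1\mapsto u$). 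Both arguments use the interval hypothesis at exactly one essential point, and both are sound; yours buys a sharper structural conclusion --- every order-reversing $\alpha\in\mathcal{PIM}_{n}^{r}$ with $\im(\alpha)\cap\dom(\alpha)\neq\emptyset$ has a periodic point of period at most $2$, not merely a non-escaping orbit --- while the paper's invariance argument is shorter and avoids any endpoint bookkeeping. (Your minor side remarks all check out: the overlap condition for intervals is $a\leqslant d$ and $c\leqslant b$; the extremes satisfy $a\alpha=d$, $b\alpha=c$; and the singleton-domain case $a=b$ forces $g(a)=0$, so it is covered by the fixed-point branch.)
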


\begin{proof}
First, suppose that $\im(\alpha)\cap \dom(\alpha)\neq\emptyset$. Take 
$i\in \im(\alpha) \cap \dom(\alpha)$ and let $k\in\Omega_n$ be such that $k\alpha=i$. 
If $i\leqslant k$, then $[i,k]\subseteq\dom(\alpha)$ and $i=k\alpha\leqslant i\alpha$, whence $[i,k]\alpha=[i,i\alpha]$ and so $i\alpha\leqslant k$, 
which implies that $i\alpha\in\dom(\alpha)\cap\im(\alpha)$. Similarly, if $k\leqslant i$, we can show that $i\alpha\in\dom(\alpha)\cap\im(\alpha)$. 
It follows by iteration that $i\alpha^m\in\dom(\alpha)\cap\im(\alpha)$, for all $m\in\N$. 
Hence, $\im(\alpha)\cap \dom(\alpha)\neq\emptyset$ implies $\alpha^m\neq 0_n$, for all $m\in\N$, i.e. $\alpha$ is not nilpotent. 

Conversely, if $\im(\alpha)\cap \dom(\alpha)=\emptyset$, then $\dom(\alpha^2)=\emptyset$, i.e. $\alpha^2=0_n$, and so $\alpha$ is nilpotent, as required. 
\end{proof}

Now, remember we proved in \cite[Proposition 4]{Ayik&al:2025} that $\lvert N(\mathcal{PIO}_{n}) \rvert=2^{n+2} - n^2 - 3n - 3$. 
Therefore, we can now establish our next result. 

\begin{proposition}\label{4}
	$\lvert N(\mathcal{PIM}_n)\rvert=2^{n+2}-n^2-3n-3+2\sum\limits_{r=2}^{n-2} \sum\limits_{j=1}^{n-r+1} \sum\limits_{k=2}^{j-1}(j-k)\binom{r-1}{k-1}$.
\end{proposition}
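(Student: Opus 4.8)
The plan is to partition $N(\mathcal{PIM}_n)$ according to whether a nilpotent element is order-preserving or order-reversing, exploiting the fact that $N(\mathcal{PIM}_n)$ splits cleanly because the only transformation that is both is the empty map $0_n$. Write $N(\mathcal{PIM}_n) = N(\mathcal{PIO}_n) \cup N(\mathcal{PIM}_n^r)$, where the union is disjoint except for $0_n$ (a constant map cannot be nilpotent unless it is $0_n$, and any order-reversing nilpotent that is also order-preserving must be $0_n$). Since the count $\lvert N(\mathcal{PIO}_n)\rvert = 2^{n+2}-n^2-3n-3$ is already known from \cite[Proposition 4]{Ayik&al:2025}, the entire task reduces to computing $\lvert N(\mathcal{PIM}_n^r)\rvert$ and then forming $\lvert N(\mathcal{PIM}_n)\rvert = \lvert N(\mathcal{PIO}_n)\rvert + \lvert N(\mathcal{PIM}_n^r)\rvert - 1$, the $-1$ removing the double-counted $0_n$.

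The key to counting $\lvert N(\mathcal{PIM}_n^r)\rvert$ is Lemma~\ref{3}: a reversing element $\alpha\in\mathcal{PIM}_n^r$ is nilpotent precisely when $\im(\alpha)\cap\dom(\alpha)=\emptyset$. So I would enumerate the triples consisting of an interval domain $\dom(\alpha)$, an interval image $\im(\alpha)$ with $\dom(\alpha)\cap\im(\alpha)=\emptyset$, and a genuinely order-reversing surjection between them (excluding constants, which are order-preserving and would overlap with the $\mathcal{PIO}_n$ count). I would parametrize by the size $r$ of the image interval, the length-type data of the domain, and the relative gap between the two intervals. An order-reversing map from an interval onto an image interval of size $r$ is determined by a strictly-decreasing assignment of the $r$ image values to blocks of the domain; counting the number of such monotone surjections from a domain of a given size onto an $r$-element image gives binomial coefficients $\binom{r-1}{k-1}$, where $k$ indexes an intermediate quantity (the number of distinct values, or a threshold position). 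The factors $(j-k)$ and the ranges $2\le r\le n-2$, $1\le j\le n-r+1$, $2\le k\le j-1$ should emerge from summing over the admissible placements of disjoint intervals on the chain subject to the reversing and surjectivity constraints, and the overall factor $2$ reflects a reflection symmetry $\alpha\mapsto h\alpha h$ or the two ways the domain can sit relative to the image (below versus above).

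The main obstacle will be setting up the triple sum correctly so that it counts each reversing nilpotent exactly once without over- or under-counting, in particular handling the boundary cases (very short domains, images touching the ends of $\Omega_n$, and the degenerate constant maps) and verifying that the disjointness condition $\dom(\alpha)\cap\im(\alpha)=\emptyset$ translates exactly into the stated index ranges. I expect the cleanest route is to fix the image interval first, then count order-reversing surjections onto it from each admissible disjoint domain interval, recognizing the inner sum $\sum_{k=2}^{j-1}(j-k)\binom{r-1}{k-1}$ as the number of such surjections weighted by the number of valid domain placements of the appropriate size; the factor $2$ then accounts for the symmetry between the case where the domain lies entirely above the image and the case where it lies entirely below. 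Once the parametrization is pinned down, the remaining verification is a routine bookkeeping check that the summand matches, which I would confirm against small cases using GAP as already acknowledged in the paper.
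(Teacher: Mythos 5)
Your strategy is in outline the same as the paper's: by Lemma~\ref{3}, the order-reversing nilpotents are exactly those whose (interval) domain and image are disjoint, and one adds their count to the known $\lvert N(\mathcal{PIO}_n)\rvert=2^{n+2}-n^2-3n-3$, the factor $2$ coming from the below/above symmetry realized by conjugation with $h$. But your first paragraph rests on a false claim: it is not true that ``a constant map cannot be nilpotent unless it is $0_n$''. Every constant map whose image point lies outside its domain squares to $0_n$ (this is exactly the $\lvert\im(\alpha)\rvert=1$ case of Lemma~\ref{3}), and such maps are simultaneously order-preserving and order-reversing. Hence $N(\mathcal{PIO}_n)\cap N(\mathcal{PIM}_n^r)$ consists of $0_n$ together with all $\frac{(n-1)n(n+1)}{3}$ nonzero nilpotent constants, not of $0_n$ alone, so your inclusion-exclusion $\lvert N(\mathcal{PIM}_n)\rvert=\lvert N(\mathcal{PIO}_n)\rvert+\lvert N(\mathcal{PIM}_n^r)\rvert-1$ overcounts by $\frac{(n-1)n(n+1)}{3}$. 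Concretely, for $n=3$ it yields $11+9-1=19$, whereas $\lvert N(\mathcal{PIM}_3)\rvert=11$ (see the paper's table): every nilpotent of $\mathcal{PIM}_3$ has image of size at most one and so already lies in $N(\mathcal{PIO}_3)$. The paper sidesteps this by adding only $\lvert N(\mathcal{PIM}_n^r)\setminus N(\mathcal{PIO}_n)\rvert$, i.e.\ the order-reversing nilpotents with $\lvert\im(\alpha)\rvert\geqslant2$, so that no subtraction is needed. Your second paragraph (``excluding constants'') points toward exactly this, but it flatly contradicts the formula you set up in the first paragraph, and you never reconcile the two.

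Beyond this error, the counting that constitutes the proposition is never actually carried out. In the paper, $r$ is the size of the domain $[j,j+r-1]$ and $k$ the size of the image: an image interval of size $k$ lying wholly below the domain can be placed in $j-k$ ways, one lying wholly above in $n-j-r-k+2$ ways, and each placement admits exactly $\binom{r-1}{k-1}$ order-reversing surjections from the domain onto it; summing over $j$, the ``above'' total equals the ``below'' total (the substitution $j\mapsto n-r-j+2$, i.e.\ conjugation by $h$), which yields the factor $2$ and the index ranges $2\leqslant r\leqslant n-2$, $2\leqslant k\leqslant j-1$. Your sketch instead takes $r$ to be the image size, which is incompatible with the binomial $\binom{r-1}{k-1}$ (monotone surjections from a domain of size $d$ onto an image of size $s$ number $\binom{d-1}{s-1}$, so the domain size must appear on top), leaves $k$ as an unspecified ``intermediate quantity'', and defers the derivation of the weights $(j-k)$ and of the index ranges to a verification ``once the parametrization is pinned down''. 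Since the proposition is precisely this counting identity, what you have deferred is not routine bookkeeping but the substance of the proof.
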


\begin{proof}
Let $\alpha\in \mathcal{PIM}_{n}^{r}$ be a non-null nilpotent element of  $\mathcal{PIM}_{n}$. 
Then, $\dom(\alpha)=[j,j+r-1]$, for some $1\leqslant r\leqslant n-1$ and $1\leqslant j\leqslant n-r+1$, 
and it is not difficult to see that there are 
$$\textstyle 
\sum\limits_{k=1}^{j-1}(j-k)\binom{r-1}{k-1}+ \sum\limits_{k=1}^{n-j-r+1} (n-j-r-k+2) \binom{r-1}{k-1}
$$ 
possibilities for transformations $\alpha$ with such domain. 
Since $|\im(\alpha)|=1$ implies $\alpha \in N(\mathcal{PIO}_{n})$, we conclude that 
$$
\begin{array}{rcl} 
	N(\mathcal{PIM}_{n}^{r}) \setminus N(\mathcal{PIO}_{n}) &= & 
	\sum\limits_{r=2}^{n-2} \sum\limits_{j=1}^{n-r+1} \left[ \sum\limits_{k=2}^{j-1} (j-k) \binom{r-1}{k-1} + 
	\sum\limits_{k=2}^{n-j-r+1} (n-j-r-k+2) \binom{r-1}{k-1} \right] \\[12pt]
	&= & 2 \sum\limits_{r=2}^{n-2} \sum\limits_{j=1}^{n-r+1} \sum\limits_{k=2}^{j-1} (j-k) \binom{r-1}{k-1}. 
\end{array}
$$
Therefore, we obtain
$$ 
\begin{array}{rcl} 
	\lvert N(\mathcal{PIM}_n) \rvert &= & \lvert N(\mathcal{PIO}_{n}) \rvert + \lvert N(\mathcal{PIM}_{n}^{r}) \setminus N(\mathcal{PIO}_{n}) \rvert \\ [4pt]
	&= & 2^{n+2} - n^2 - 3n - 3 + 2\sum\limits_{r=2}^{n-2} \sum\limits_{j=1}^{n-r+1} \sum\limits_{k=2}^{j-1} (j-k) \binom{r-1}{k-1},
	\end{array}
$$
as claimed.	
\end{proof}

\begin{center}
	\renewcommand{\arraystretch}{1.5}
	\begin{tabular}{|c|*{12}{c|}}
		\hline
		$n$ & 1 & 2 & 3 & 4 & 5 & 6 & 7 & 8 & 9 & 10 & 11 & 12 \\
		\hline
		$\lvert \mathcal{IM}_n \rvert$ & 1 & 4 & 13 & 36 & 91 & 218 & 505 & 1144 & 2551 & 5622 & 12277 & 26612   \\
		\hline
		$\lvert \mathcal{PIM}_n\rvert$ & 2 & 9 & 37 & 123 & 352 & 913 & 2219 & 5163 & 11662 & 25809 & 56305 & 121579 \\
		\hline
		$\lvert N(\mathcal{PIM}_n) \rvert$ & 1 & 3 & 11 & 35 & 95 & 231 & 521 & 1117 & 2315 & 4693 & 9395 & 18523 \\
		\hline
	\end{tabular}
\end{center}
\smallskip	

Notice that, for $n\geqslant 2$, it is easy to verify that $N(\mathcal{PIM}_{n})$  is not a semigroup. 

\smallskip 

Next, we will determine the ranks of the monoids $\mathcal{IM}_{n}$ and  $\mathcal{PIM}_{n}$. 

First, we consider the monoid $\mathcal{IM}_{n}$. For $n\geqslant 3$ and $1\leqslant i\leqslant n-1$, let
$$
	a_i = \left(\begin{array}{cccccc}
		1 & \cdots & i & i+1 & \cdots   & n \\
		1 & \cdots & i & i   & \cdots   & n-1
	\end{array}\right) \quad \mbox{and} \quad 
	b_i = \left(
	\begin{array}{cccccccc}
		1 & \cdots & i   & i+1 & \cdots   & n \\
		2 & \cdots & i+1 & i+1 & \cdots   & n
	\end{array}\right).
$$
It is shown in \cite[Proposition 3.3] {Fernandes&Paulista:2023} that $\{a_1, \ldots, a_{n-2}, b_{n-1}\}$ is a generating set of $\mathcal{IO}_{n}$ with minimum size. 
Notice that, $\mathcal{IM}_{1}=\mathcal{T}_{1}$ and it is
routine matter to check that
$\mathcal{IM}_{2} =\langle  \left( \begin{smallmatrix}
1 & 2 \\
1 & 1
\end{smallmatrix}\right),  \left( \begin{smallmatrix}
1 & 2 \\
2 & 1
\end{smallmatrix}\right) \rangle$. Hence, clearly, $\mathcal{IM}_{1}$ has rank $0$ and $\mathcal{IM}_{2}$ has rank $2$.
Let us consider $n\geqslant 3$ and define
$$
	c_{i} = \left(\begin{array}{cccccccc}
		1  & \cdots & n-i-1 & n-i & n-i+1 & n-i+2 & \cdots   & n \\
		n-1 & \cdots &  i+1  &  i  &   i   &  i-1  & \cdots   & 1
	\end{array}\right),
$$
for $1\leqslant i\leqslant \lfloor \frac{n}{2} \rfloor$, 
where $\lfloor x \rfloor$ denotes the greatest integer less than or equal to a real number $x$. 

\begin{lemma}\label{5}
Let $n\geqslant 3$. Then, $\{c_{1}, \ldots, c_{\lfloor \frac{n}{2} \rfloor},  h\}$ is a generating set of $\mathcal{IM}_{n}$.
\end{lemma}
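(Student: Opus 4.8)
The plan is to reduce everything to the known generating set $\{a_1,\dots,a_{n-2},b_{n-1}\}$ of $\mathcal{IO}_n$ and to use the permutation $h$ to pass between the order-reversing and order-preserving parts of $\mathcal{IM}_n$. Write $T=\langle c_1,\dots,c_{\lfloor n/2\rfloor},h\rangle$. First I would dispose of the easy inclusion $T\subseteq\mathcal{IM}_n$: each $c_i$ is order-reversing with image $[1,n-1]$, so $c_i\in\mathcal{IM}_n^r$, and $h\in\mathcal{IM}_n$, whence $T\subseteq\mathcal{IM}_n$. The content is the reverse inclusion, and for it I claim it suffices to prove $\mathcal{IO}_n\subseteq T$. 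Indeed, $\mathcal{IM}_n=\mathcal{IO}_n\cup\mathcal{IM}_n^r$, and by the observation preceding Theorem~\ref{2} every $\alpha\in\mathcal{IM}_n^r$ satisfies $\alpha h\in\mathcal{IO}_n$; hence $\alpha=(\alpha h)h$ with $\alpha h\in\mathcal{IO}_n\subseteq T$ and $h\in T$, so $\alpha\in T$ and therefore $\mathcal{IM}_n\subseteq T$.

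The core of the argument is to identify the composites $h c_i$ and $c_i h$. Using $xh=n+1-x$ together with the three-interval description of $c_i$ (namely $xc_i=n-x$ for $x\in[1,n-i]$, $(n-i+1)c_i=i$, and $xc_i=n+1-x$ for $x\in[n-i+2,n]$), a direct evaluation should give $h c_i=a_i$ and $c_i h=b_{n-i}$, for every $1\le i\le\lfloor n/2\rfloor$. Consequently $T$ contains $a_1,\dots,a_{\lfloor n/2\rfloor}$ and, as $i$ runs through $1,\dots,\lfloor n/2\rfloor$, the elements $b_{n-1},b_{n-2},\dots,b_{\lceil n/2\rceil}$; in particular $b_{n-1}=c_1h\in T$.

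It remains to recover the ``top half'' $a_{\lfloor n/2\rfloor+1},\dots,a_{n-2}$ of the order-preserving generators, which are not directly produced by the previous step. For this I would verify the single relation $a_j=b_j a_1$: for $x\le j$ one has $x\mapsto x+1\mapsto x$, and for $x\ge j+1$ one has $x\mapsto x\mapsto x-1$, which is exactly $a_j$. Since $a_1=h c_1\in T$, and since every $j$ with $\lfloor n/2\rfloor<j\le n-2$ satisfies $\lceil n/2\rceil\le j\le n-1$ (so that $b_j\in T$ by the previous paragraph), this yields $a_j\in T$. Thus $\{a_1,\dots,a_{n-2},b_{n-1}\}\subseteq T$, and by \cite[Proposition 3.3]{Fernandes&Paulista:2023} we conclude $\mathcal{IO}_n=\langle a_1,\dots,a_{n-2},b_{n-1}\rangle\subseteq T$, which completes the proof.

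The transformation computations ($h c_i$, $c_i h$, and $b_j a_1$) are routine; the one place that demands care is the index bookkeeping in the final step. The delicate point is to confirm that the family of $b_j$'s actually delivered by the $c_i h$ — precisely those with $\lceil n/2\rceil\le j\le n-1$ — is, together with $a_1$, exactly enough to reconstruct all the missing $a_j$ with $\lfloor n/2\rfloor<j\le n-2$. I expect this small range verification (splitting into the parities of $n$), rather than any of the explicit map evaluations, to be the only spot where an off-by-one error could creep in.
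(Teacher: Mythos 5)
Your proof is correct and takes essentially the same approach as the paper: reduce to the fact that $\mathcal{IO}_n\cup\{h\}$ generates $\mathcal{IM}_n$, then express the Fernandes--Paulista generators $a_1,\ldots,a_{n-2},b_{n-1}$ of $\mathcal{IO}_n$ in terms of the $c_i$ and $h$ (and your index bookkeeping at the end is indeed sound, since $\lfloor n/2\rfloor+1\geqslant\lceil n/2\rceil$). In fact your detour through the $b_j$'s is the paper's third relation in disguise: since $b_j=c_{n-j}h$ and $a_1=hc_1$, your product $b_ja_1=c_{n-j}hhc_1=c_{n-j}c_1$ is exactly the paper's factorization $a_{n-i}=c_ic_1$.
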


\begin{proof} 
Since $\alpha=(\alpha h)h$ and $\alpha h\in \mathcal{IO}_{n}$, for all $\alpha\in  \mathcal{IM}_{n}^{r}$, 
we conclude that $\mathcal{IM}_{n}$ is generated by  $\mathcal{IO}_{n} \cup \{h\}$. 
On the other hand, it is a routine matter to check that 
$$
b_{n-1}=c_{1}h, \quad a_{i}=hc_{i} \quad \mbox{and}\quad a_{n-i}=c_{i}c_{1}, \quad \mbox{for $1\leqslant i\leqslant \lfloor \frac{n}{2} \rfloor$}.
$$ 
Hence, $\mathcal{IM}_{n}=\langle a_1,\ldots, a_{n-2},b_{n-1},h\rangle \subseteq \langle c_{1}, \ldots, c_{\lfloor \frac{n}{2} \rfloor},  h \rangle$, and so 
 $\{c_{1}, \ldots, c_{\lfloor \frac{n}{2} \rfloor},  h \}$ generates $\mathcal{IM}_{n}$, as required.
\end{proof}

In order to prove that $\{c_{1}, \ldots, c_{\lfloor \frac{n}{2} \rfloor},  h\}$ is a  generating set  of $\mathcal{IM}_{n}$ with minimum size, 
let $D_{r}=\{ \alpha \in \mathcal{IM}_{n} \mid \lvert \im(\alpha)\rvert =r \}$, for $r\in\{n-1,n\}$, and
let $\pi_i$ be the equivalence on $\Omega_{n}$ defined by the partition 
$\{ \{1\},\ldots ,\{i-1\}, \{i,i+1\}, \{i+2\},\ldots \{n\}\}$, for $1\leqslant i\leqslant n-1$. 
It is clear that $D_{n}=\{1_{n},h\}=\langle h\rangle$, where $1_n$ denotes the identity transformation on $\Omega_n$. 
On the other hand, it is easy to check that 
$
D_{n-1}=\{a_i,ha_i,a_ih,ha_ih\mid 1\leqslant i\leqslant n-1\}
$
and, for $1\leqslant i\leqslant n-1$, we have $\ker(a_i)=\ker(ha_ih)=\pi_i$ and $\ker(ha_i)=\ker(a_ih)=\pi_{n-i}$. 

\begin{theorem}\label{6}
Let $n\geqslant 3$. Then,  $\rank(\mathcal{IM}_{n})=\lfloor \frac{n}{2}\rfloor +1$.
\end{theorem}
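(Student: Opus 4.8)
The upper bound $\rank(\mathcal{IM}_n)\le\lfloor\frac{n}{2}\rfloor+1$ is already in hand: Lemma~\ref{5} exhibits the generating set $\{c_{1},\dots,c_{\lfloor n/2\rfloor},h\}$, which has exactly $\lfloor\frac{n}{2}\rfloor+1$ elements. So the whole task is to establish the matching lower bound $\rank(\mathcal{IM}_n)\ge\lfloor\frac{n}{2}\rfloor+1$, and the plan is to show that any generating set $X$ of $\mathcal{IM}_n$ must contain $h$ together with at least $\lfloor\frac{n}{2}\rfloor$ elements of $D_{n-1}$.

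First I would dispose of the group of units. A transformation in $\mathcal{IM}_n$ has image size $n$ precisely when it is a bijection, and a monotone bijection of $\Omega_n$ is either $1_n$ or $h$; thus $D_{n}=\{1_n,h\}=\langle h\rangle$. Since $\lvert\im(\alpha\beta)\rvert\le\min\{\lvert\im(\alpha)\rvert,\lvert\im(\beta)\rvert\}$ for full transformations, any product involving a factor of image size $<n$ has image size $<n$. Hence $h$ can only be obtained as a product of elements of $D_n$, and to recover the nontrivial unit $h$ the set $X$ must actually contain $h$.

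Next I would analyse $D_{n-1}$ through its kernels. Fix $\alpha\in D_{n-1}$ and write $\alpha=g_{1}\cdots g_{k}$ with $g_{1},\dots,g_{k}\in X$. As image sizes are non-increasing along the product and the final value is $n-1$, every factor lies in $D_{n}\cup D_{n-1}$; let $g_{j}$ be the first factor in $D_{n-1}$, so that $u:=g_{1}\cdots g_{j-1}\in\{1_n,h\}$. The elementary fact I would isolate is that if $\lvert\im(\gamma\delta)\rvert=\lvert\im(\gamma)\rvert$ then $\delta$ is injective on $\im(\gamma)$, whence $\ker(\gamma\delta)=\ker(\gamma)$. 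Applying this from $g_{j}$ onwards, where the image size remains constant equal to $n-1$, gives $\ker(\alpha)=\ker(ug_{j})$. Finally, left multiplication by the unit $u$ relabels kernel blocks by $u^{-1}$, and since $h$ carries the block $\{i,i+1\}$ to $\{n-i,n-i+1\}$ we obtain $\ker(\alpha)=\ker(g_{j})$ when $u=1_n$ and $\ker(\alpha)=\pi_{n-i}$ when $u=h$ and $\ker(g_{j})=\pi_{i}$. In either case $\ker(\alpha)$ lies in the orbit $\{\pi_{i},\pi_{n-i}\}$ of $\ker(g_{j})$ under the involution $i\mapsto n-i$.

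To finish, recall that the kernels occurring in $D_{n-1}$ are precisely $\pi_{1},\dots,\pi_{n-1}$ (for instance $\ker(a_{i})=\pi_{i}$), and that the involution $i\mapsto n-i$ partitions $\{\pi_{1},\dots,\pi_{n-1}\}$ into exactly $\lfloor\frac{n}{2}\rfloor$ orbits, namely the pairs $\{\pi_{i},\pi_{n-i}\}$ with $i<\frac{n}{2}$ together with the fixed block $\pi_{n/2}$ when $n$ is even. By the previous paragraph, each generator in $X\cap D_{n-1}$ can only produce elements whose kernel lies in its own orbit, so $X$ must meet every one of the $\lfloor\frac{n}{2}\rfloor$ orbits; that is, $\lvert X\cap D_{n-1}\rvert\ge\lfloor\frac{n}{2}\rfloor$. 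Since $h\in X\setminus D_{n-1}$, we conclude $\lvert X\rvert\ge\lfloor\frac{n}{2}\rfloor+1$, which, combined with Lemma~\ref{5}, yields the claimed equality. The delicate step, and the one I would write out most carefully, is the kernel-tracking of the third paragraph: verifying that the image size is genuinely constant from $g_{j}$ onward so that the kernel is frozen, and correctly accounting for the $h$-relabelling that allows a single generator to cover two kernels at once.
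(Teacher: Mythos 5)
Your proof is correct and follows essentially the same route as the paper: both force $h$ into any generating set via the group of units $D_n=\{1_n,h\}$, both analyse a factorization of an element of $D_{n-1}$ by stripping the unit prefix and showing the first $D_{n-1}$-factor must have kernel $\pi_i$ or $\pi_{n-i}$, and both count the $\lfloor \frac{n}{2}\rfloor$ pairs $\{\pi_i,\pi_{n-i}\}$ to get the lower bound. The only differences are presentational: the paper argues per index $i$ using the containment $\ker(\alpha_1)\subseteq\ker(a_i)$, while you track kernels forward via the injectivity fact and then count orbits of the involution $i\mapsto n-i$, which is, if anything, slightly more careful about identity factors.
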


\begin{proof} 
Let $U$ be any generating set of $\mathcal{IM}_{n}$. It is clear that $h\in U$ and $U\cap D_{n-1} \neq \emptyset$, since $\langle h\rangle =D_{n}$. 
Let $1\leqslant i\leqslant \lfloor \frac{n}{2} \rfloor$. 
Then, there exist $\alpha_{1}, \ldots ,\alpha_{t} \in U$ such that $a_i= \alpha_{1} \cdots \alpha_{t}$. 
If $t=1$, then $a_i\in U$ and $\ker(a_i)=\pi_i$. Suppose that $t\geqslant2$. Since $h^2=1_n$, we can also suppose that $\alpha_1\neq h$ or $\alpha_2\neq h$. 
If $\alpha_1\neq h$, then $\alpha_1\in D_{n-1}$ and $\ker(\alpha_1)\subseteq\ker(a_i)=\pi_i$, whence $\ker(\alpha_1)=\pi_i$. 
On the other hand, if $\alpha_1=h$, then $\alpha_2\neq h$ and $ha_i=\alpha_2\cdots\alpha_t$, 
whence $\alpha_2\in D_{n-1}$ and $\ker(\alpha_2)\subseteq\ker(ha_i)=\pi_{n-i}$, and so $\ker(\alpha_2)=\pi_{n-i}$. 
Therefore, clearly, $U$ contains at least $\lfloor \frac{n}{2} \rfloor$
distinct elements of $D_{n-1}$, whence $\lvert  U\rvert \geqslant \lfloor \frac{n}{2} \rfloor+1$, and so, by Lemma \ref{5}, 
we get $\rank(\mathcal{IM}_{n})=\lfloor \frac{n}{2}\rfloor +1$, as required. 
\end{proof}

Now, we consider the monoid $\mathcal{PIM}_{n}$. 
Notice that, $\mathcal{PIM}_{1}=\mathcal{PT}_{1}$ and so $\mathcal{PIM}_{1}$ has rank $1$. 
On the other hand, it is easy to check that
$ \left\{ \left( \begin{smallmatrix}
1  \\
1 
\end{smallmatrix}\right),  \left( \begin{smallmatrix}
1 & 2 \\
1 & 1
\end{smallmatrix}\right),   \left( \begin{smallmatrix}
1 & 2 \\
2 & 1
\end{smallmatrix}\right)\right\}$ 
is a generating set of $\mathcal{PIM}_{2}$ with minimum size, whence $\mathcal{PIM}_{2}$ has rank $3$. 
For $n\geqslant 3$ and $1\leqslant i\leqslant n-1$, define the following elements of $\mathcal{PIM}_{n}$: 
\begin{equation*} 
	e_{i}=\begin{pmatrix} 
		1 & \cdots & i  \\
		1 & \cdots & i 
	\end{pmatrix} \quad\text{ and }\quad
	f_{i+1}=\begin{pmatrix} 
		i+1 & \cdots   & n \\
		i+1 & \cdots   & n
	\end{pmatrix}.
\end{equation*}
Recall that, in \cite[Theorem 11]{Ayik&al:2025}, we have shown that $\{a_1, \ldots, a_{n-2}, b_{n-1}, e_{n-1}, f_{2}\}$ is a minimal generating set of $\mathcal{PIO}_{n}$. 

\begin{lemma}\label{5p}
Let $n\geqslant 3$. Then, $\{c_{1}, \ldots, c_{\lfloor \frac{n}{2} \rfloor},  e_{n-1},h\}$ is a generating set of $\mathcal{PIM}_{n}$.
\end{lemma}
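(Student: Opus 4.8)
The plan is to mirror the strategy of Lemma \ref{5}, reducing the problem to the already-established generating set of $\mathcal{PIO}_n$ from \cite[Theorem 11]{Ayik&al:2025} and then re-expressing those generators in terms of the proposed set.

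First I would argue that $\mathcal{PIM}_n$ is generated by $\mathcal{PIO}_n \cup \{h\}$. Indeed, $\mathcal{PIM}_n = \mathcal{PIO}_n \cup \mathcal{PIM}_n^r$, and for any order-reversing $\alpha \in \mathcal{PIM}_n^r$ one has $\alpha = (\alpha h)h$ with $\alpha h \in \mathcal{PIO}_n$; this is exactly the observation made just before Theorem \ref{2}, namely that $\alpha h$ is order-preserving, shares the interval domain of $\alpha$, and has image $\im(\alpha)h$ which is again an interval since $h$ maps intervals to intervals. Hence $\mathcal{PIM}_n = \langle \mathcal{PIO}_n, h \rangle$, just as in the full-transformation case.

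Second, recalling that $\{a_1, \ldots, a_{n-2}, b_{n-1}, e_{n-1}, f_2\}$ generates $\mathcal{PIO}_n$, it remains to write each of these generators using $c_1, \ldots, c_{\lfloor \frac{n}{2} \rfloor}, e_{n-1}, h$. For the generators $a_1, \ldots, a_{n-2}, b_{n-1}$ the identities $b_{n-1} = c_1 h$, $a_i = h c_i$ and $a_{n-i} = c_i c_1$ (for $1 \leqslant i \leqslant \lfloor \frac{n}{2} \rfloor$) established in Lemma \ref{5} already suffice, since these expressions jointly cover every $a_i$ with $1 \leqslant i \leqslant n-1$. The generator $e_{n-1}$ lies in the proposed set by definition, so the only genuinely new point is $f_2$. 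I would verify by direct computation that $f_2 = h\, e_{n-1}\, h$: conjugating the partial identity on $[1,n-1]$ by the reversal $h$ transports its domain and image to $[2,n]$, which is precisely the partial identity $f_2$.

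Putting these together gives $\mathcal{PIO}_n \subseteq \langle c_1, \ldots, c_{\lfloor \frac{n}{2} \rfloor}, e_{n-1}, h \rangle$, and since $h$ also belongs to this set we conclude $\mathcal{PIM}_n = \langle \mathcal{PIO}_n, h \rangle \subseteq \langle c_1, \ldots, c_{\lfloor \frac{n}{2} \rfloor}, e_{n-1}, h \rangle$; the reverse inclusion is immediate because each listed element lies in $\mathcal{PIM}_n$. The calculations here are entirely routine, so I do not expect a real obstacle: the only points requiring care are the verification of the new conjugation identity $f_2 = h e_{n-1} h$ and the confirmation that the reduction step genuinely remains inside the partial monotone setting, both of which rest on $h$ being an order-two permutation that preserves intervals.
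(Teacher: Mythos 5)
Your proposal is correct and follows essentially the same route as the paper's proof: reduce to the fact that $\mathcal{PIM}_n$ is generated by $\mathcal{PIO}_n\cup\{h\}$, invoke the generating set $\{a_1,\ldots,a_{n-2},b_{n-1},e_{n-1},f_2\}$ of $\mathcal{PIO}_n$, absorb $a_1,\ldots,a_{n-2},b_{n-1}$ via Lemma \ref{5}, and handle the only new generator through the identity $f_2=he_{n-1}h$. The paper's argument is exactly this, so no further comparison is needed.
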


\begin{proof} 
Let $\alpha\in  \mathcal{PIM}_{n}^{r}$. Then, $\alpha=(\alpha h)h$ and $\alpha h\in \mathcal{PIO}_{n}$. 
This allows us to conclude that $\mathcal{PIM}_{n}$ is generated by $\mathcal{PIO}_{n} \cup \{h\}$ and so 
$\{a_1, \ldots, a_{n-2}, b_{n-1}, e_{n-1}, f_{2},h\}$ also generates  $\mathcal{PIM}_{n}$. 
Since $a_1, \ldots, a_{n-2}, b_{n-1}\in \langle c_{1}, \ldots, c_{\lfloor \frac{n}{2} \rfloor},  h \rangle$, by Lemma \ref{5}, 
and $f_2=he_{n-1}h$, it follows that  
$\mathcal{PIM}_{n}=\langle a_1,\ldots, a_{n-2},b_{n-1},e_{n-1},h\rangle \subseteq \langle c_{1}, \ldots, c_{\lfloor \frac{n}{2} \rfloor}, e_{n-1}, h \rangle$
and so 
 $\{c_{1}, \ldots, c_{\lfloor \frac{n}{2} \rfloor}, e_{n-1}, h \}$ generates $\mathcal{PIM}_{n}$, as required.
\end{proof}

\begin{theorem}\label{7}
Let $n\geqslant 3$. Then, $\rank(\mathcal{PIM}_{n})=\lfloor \frac{n}{2}\rfloor +2$.
\end{theorem}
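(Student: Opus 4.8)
The plan is to pair the upper bound with a matching lower bound. Lemma~\ref{5p} already gives the generating set $\{c_{1},\dots,c_{\lfloor n/2\rfloor},e_{n-1},h\}$ of size $\lfloor n/2\rfloor+2$, so $\rank(\mathcal{PIM}_{n})\le\lfloor n/2\rfloor+2$. For the reverse inequality I would show that every generating set $U$ of $\mathcal{PIM}_{n}$ must contain three pairwise disjoint families of elements: (i) the permutation $h$; (ii) at least $\lfloor n/2\rfloor$ elements of $D_{n-1}$; and (iii) at least one properly partial transformation. These families are disjoint because their members are separated by the invariants $|\im|$ (namely $n$ for $h$ and $n-1$ for elements of $D_{n-1}$) and $|\dom|$ (namely $n$ for every full transformation), so this immediately yields $|U|\ge\lfloor n/2\rfloor+2$.

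The whole argument rests on two monotonicity facts that I would record first: for all $\alpha,\beta\in\mathcal{PIM}_{n}$ one has $|\im(\alpha\beta)|\le\min\{|\im(\alpha)|,|\im(\beta)|\}$ and $\dom(\alpha\beta)\subseteq\dom(\alpha)$. The second fact shows that a product of full transformations is again full; hence if $U$ consisted only of full transformations we would have $\langle U\rangle\subseteq\mathcal{IM}_{n}\subsetneq\mathcal{PIM}_{n}$, a contradiction, so $U$ must contain a properly partial element, giving (iii). The first fact shows that any transformation of image size $n$ can only be a product of factors of image size $n$, that is, of factors from $\{1_{n},h\}$; writing $h$ as such a product forces an odd number of $h$-factors, whence $h\in U$, giving (i) exactly as in the proof of Theorem~\ref{6}.

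For (ii) I would adapt the kernel argument of Theorem~\ref{6}, taking care that partial factors cannot interfere. Fix $1\le i\le\lfloor n/2\rfloor$ and write $a_{i}=\alpha_{1}\cdots\alpha_{t}$ with $\alpha_{j}\in U$. Since $a_{i}$ is full, $\dom(a_{i})=\Omega_{n}\subseteq\dom(\alpha_{1})$ forces $\alpha_{1}$ to be full, and since $|\im(a_{i})|=n-1<n$ not every factor can be a bijection; let $\alpha_{k}$ be the first factor outside $\{1_{n},h\}$. Then $\alpha_{1}\cdots\alpha_{k-1}=h^{\varepsilon}$ for some $\varepsilon\in\{0,1\}$, so $h^{\varepsilon}a_{i}=\alpha_{k}\cdots\alpha_{t}$. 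As $h^{\varepsilon}a_{i}$ is full, $\alpha_{k}$ is full, and from $n-1=|\im(h^{\varepsilon}a_{i})|\le|\im(\alpha_{k})|<n$ we obtain $\alpha_{k}\in D_{n-1}$. Finally $\ker(\alpha_{k})\subseteq\ker(h^{\varepsilon}a_{i})$, which is $\pi_{i}$ if $\varepsilon=0$ and $\pi_{n-i}$ if $\varepsilon=1$; since the kernel of any element of $D_{n-1}$ has the form $\pi_{j}$ and $\pi_{j}\subseteq\pi_{i}$ forces $j=i$, we conclude $\ker(\alpha_{k})\in\{\pi_{i},\pi_{n-i}\}$. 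Thus $U$ contains an element of $D_{n-1}$ whose kernel lies in $\{\pi_{i},\pi_{n-i}\}$.

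To finish I would verify that these elements are distinct for distinct $i$, which reduces to checking that the pairs $\{\pi_{i},\pi_{n-i}\}$, $1\le i\le\lfloor n/2\rfloor$, are pairwise disjoint: an equality across two such pairs would force $i+i'=n$, which is impossible for distinct $i,i'\le\lfloor n/2\rfloor$ since then $i+i'<n$. This produces the required $\lfloor n/2\rfloor$ distinct elements of $D_{n-1}\cap U$, and combining (i)--(iii) gives $\rank(\mathcal{PIM}_{n})\ge\lfloor n/2\rfloor+2$. I expect the main obstacle to be precisely step (ii): one must guarantee that collapsing the leading block of bijections into $h^{\varepsilon}$ preserves the kernel obstruction and that the first genuine factor $\alpha_{k}$ is forced to be full of image size exactly $n-1$. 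This is the only place where the presence of partial transformations could a priori spoil the $\mathcal{IM}_{n}$ argument, and it is handled by using the two monotonicity facts in tandem.
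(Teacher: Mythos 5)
Your proposal is correct and follows essentially the same route as the paper: the upper bound from Lemma~\ref{5p}, the forced membership of $h$ and of a non-full element, and the kernel argument of Theorem~\ref{6} (adapted via $\dom(\alpha\beta)\subseteq\dom(\alpha)$ to show the relevant factors are full) producing $\lfloor \frac{n}{2}\rfloor$ distinct elements of $D_{n-1}$. Your write-up merely makes explicit the bookkeeping (collapsing a prefix of bijections to $h^{\varepsilon}$ and the disjointness of the pairs $\{\pi_i,\pi_{n-i}\}$) that the paper delegates to the phrase ``the same reasoning as in the proof of Theorem~\ref{6}.''
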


\begin{proof}  
Let $V$ be any generating set of $\mathcal{PIM}_{n}$. 
Clearly, as for $\mathcal{IM}_{n}$, we must have $h\in V$.
Let $1\leqslant i\leqslant \lfloor \frac{n}{2} \rfloor$. 
Then, there exist $\alpha_{1}, \ldots ,\alpha_{t} \in V$ such that $a_i= \alpha_{1} \cdots \alpha_{t}$. 
Since $\dom(\alpha_1)=\dom(a_i)=\Omega_n$ and, if $t\geqslant2$, $\dom(\alpha_2)=\dom(ha_i)=\Omega_n$, 
the same reasoning as in the proof of Theorem \ref{6} allows us to deduce that $V$ also contains at least $\lfloor \frac{n}{2} \rfloor$
distinct elements of $D_{n-1}$. On the other hand, $V$ must contain at least a non-full transformation, 
whence $\lvert  V\rvert \geqslant \lfloor \frac{n}{2} \rfloor+2$, and so, by Lemma \ref{5p}, 
we have $\rank(\mathcal{PIM}_{n})=\lfloor \frac{n}{2}\rfloor +2$, as stated. 
\end{proof}

\section{Presentations}

We begin this section by recalling some notions on presentations. 
For a set $A$, let $A^{*}$ denote the free monoid on $A$ consisting of all finite words over $A$. 
The empty word is denoted by $1$. 
A \emph{monoid presentation} is an ordered pair $\langle A\mid R\rangle$, where $A$ is an alphabet and $R$ is a subset of $A^{*}\times A^{*}$. 
Each element $(u,v)$ of $R$ is called a \textit{(defining) relation}, and it is usually written by $u=v$. 
A monoid $M$ is said to be \textit{defined by a presentation} $\langle A\mid R\rangle$ if $M$ is isomorphic to $A^{*}/\sim_{R}$, where $\sim_R$ denotes the congruence on $A^*$ generated by $R$, i.e. $\sim_{R}$ is the smallest congruence on $A^{*}$ containing $R$. 
Let  $X$ be a generating set of a monoid $M$ and let $\phi: A\rightarrow M$ be an injective mapping such that $A\phi =X$. 
If $\varphi : A^{*}\rightarrow M$ is the (unique) homomorphism that extends $\phi$ to $A^{*}$, 
then we say that $X$ \textit{satisfy} a relation $u=v$ of $A^{*}$  if $u\varphi =v\varphi$. 
Usually, if there is no danger of ambiguity, we represent the set of generators of $M$ and the alphabet by the same symbol, as well as their elements, thus considering the mapping $\phi: A\rightarrow M$ such that $x\mapsto x$, for all $x\in A$. 
For more details, see \cite{Lallement:1979} or \cite{Ruskuc:1995}.

\smallskip

Next, we describe the process established in \cite{Fernandes&al:2004} to obtain a
presentation for a finite monoid $T$ given a presentation for a certain
submonoid of $T$. This method will be applied to obtain presentations for $\mathcal{IM}_n$ and $\mathcal{PIM}_n$. 

\smallskip

Let $T$ be a (finite) monoid with identity $1$, let $M$ be a submonoid of $T$ and let $y$ be an
element of $T$ such that $y^2=1$. Let us suppose that $T$ is
generated by $M$ and $y$. Let $X=\{x_1,\ldots,x_k\}$ ($k\in\N$) be
a generating set of $M$ and $\langle X\mid R\rangle$ a
presentation for $M$. 
Suppose there exists a \textit{set of canonical forms} $W$ for $\langle X\mid R\rangle$, i.e. 
a transversal for the congruence $\sim_R$ of $X^*$, 
two subsets $U$ and $V$ of $W$ and a word $u_0\in X^*$
such that $W=U\cup V$ and
$u_0$ is a factor of each word in $U$. 
Let $Y=X\cup\{y\}$ (notice that $Y$ generates $T$) and suppose there exist words
$v_0,v_1,\ldots,v_k\in X^*$ such that the following relations over
the alphabet $Y$ are satisfied by the generating set $Y$ of $T$:
\begin{description}
	\item $(\text{N}_1)$ $yx_i=v_iy$, for all $1\leqslant i\leqslant k$;
	\item $(\text{N}_2)$ $u_0y=v_0$.
\end{description}

Observe that the relation (over the alphabet $Y$)
\begin{description}
	\item $(\text{N}_0)$  $y^2=1$
\end{description}
is also satisfied (by the generating set $Y$ of $T$), by
hypothesis. Let
$$
\overline{R}=R\cup \text{N}_0\cup \text{N}_1\cup \text{N}_2
\quad\text{and}\quad 
\overline{W}=W\cup\{wy\mid w\in V\}\subseteq Y^*. 
$$
Then, we have the following result. 

\begin{theorem}[{\cite[Theorem 2.4]{Fernandes&al:2004}}] \label{overpresentation}
	Under the previous conditions, if $W$ contains the empty word, then $\overline{W}$ 
	is a set of canonical forms for the presentation $\langle Y\mid \overline{R}\rangle$. 
	Moreover, if $\lvert \overline{W}\rvert \leqslant \lvert T\rvert$, then the monoid $T$ is defined by the presentation 
	$\langle Y\mid\overline{R}\rangle$. 
\end{theorem}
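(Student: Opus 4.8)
The plan is to prove the two assertions in order, reducing the second to the first by counting. Assume the first assertion, so that $\overline{W}$ is a transversal for $\sim_{\overline{R}}$ and hence $\lvert Y^*/{\sim_{\overline{R}}}\rvert=\lvert\overline{W}\rvert$. Since the generating set $Y$ of $T$ satisfies every relation of $\overline{R}$ — the relations of $R$ hold in $M\subseteq T$, relation $(\mathrm{N}_0)$ holds because $y^2=1$, and $(\mathrm{N}_1)$, $(\mathrm{N}_2)$ hold by hypothesis — the evaluation homomorphism $Y^*\rightarrow T$ is constant on $\sim_{\overline{R}}$-classes and therefore factors through a surjective homomorphism $\psi\colon Y^*/{\sim_{\overline{R}}}\rightarrow T$. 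Thus $\lvert\overline{W}\rvert=\lvert Y^*/{\sim_{\overline{R}}}\rvert\geqslant\lvert T\rvert$, and the extra hypothesis $\lvert\overline{W}\rvert\leqslant\lvert T\rvert$ forces equality; then $\psi$ is a bijective homomorphism, i.e. an isomorphism, so $\langle Y\mid\overline{R}\rangle$ defines $T$. Note that this counting step is exactly where the size hypothesis is consumed, which confirms that the first assertion must be proved without reference to $\lvert T\rvert$.

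For the first assertion I would separately establish spanning and separation. For spanning, given an arbitrary word of $Y^*$, I would use $(\mathrm{N}_1)$ to push every occurrence of $y$ rightward past the generators of $X$ (each such move replaces some $x_i$ by $v_i$) and $(\mathrm{N}_0)$ to cancel adjacent pairs $yy$; since each step strictly decreases a suitable measure, after finitely many steps the word becomes $\sim_{\overline{R}}$-equivalent to one of the form $w'y^{\varepsilon}$ with $w'\in X^*$ and $\varepsilon\in\{0,1\}$. Rewriting $w'$ by the relations of $R$ to its canonical form $\overline{w}'\in W$, we reach $\overline{w}'y^{\varepsilon}$. If $\varepsilon=0$, or if $\varepsilon=1$ and $\overline{w}'\in V$, this already lies in $\overline{W}$; the only remaining case is $\varepsilon=1$ with $\overline{w}'\in U$, and this is precisely where relation $(\mathrm{N}_2)$, together with the hypothesis that $u_0$ is a factor of every word in $U$, is invoked to rewrite $\overline{w}'y$ back into $X^*$ and hence into $W$. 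The hypothesis that $W$ contains the empty word guarantees that $1\in\overline{W}$, anchoring the procedure.

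The main obstacle is separation, which, as noted, must be obtained intrinsically from the presentation. I would realise it by letting $Y^*$ act on the finite set $\overline{W}$: to each $a\in Y$ assign the map $\rho_a\colon\overline{W}\rightarrow\overline{W}$ that sends $\overline{w}$ to the element of $\overline{W}$ produced by the deterministic reduction of $\overline{w}a$ described above, and extend $a\mapsto\rho_a$ to a homomorphism $\rho\colon Y^*\rightarrow\mathcal{T}_{\overline{W}}$ into the full transformation monoid on $\overline{W}$. The crucial point is to verify that $\rho$ respects every relation of $\overline{R}$, so that $\rho$ factors through $Y^*/{\sim_{\overline{R}}}$; evaluating the resulting transformations at the empty word then returns each canonical form to itself, so $\rho$ separates the elements of $\overline{W}$, and combined with spanning this shows that the natural map $\overline{W}\rightarrow Y^*/{\sim_{\overline{R}}}$ is a bijection. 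Checking that $\rho$ respects $\overline{R}$ is essentially a confluence check for the rewriting determined by $R$, $(\mathrm{N}_0)$, $(\mathrm{N}_1)$ and $(\mathrm{N}_2)$, and it is here that the compatibility among $u_0$, the words $v_0,\ldots,v_k$ and the decomposition $W=U\cup V$ has to be exploited carefully; this is the technical heart of the argument.
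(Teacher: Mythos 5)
Your opening reduction (counting plus the surjection $Y^*/{\sim_{\overline{R}}}\rightarrow T$) is fine, but the overall architecture breaks at exactly the point you yourself call ``the technical heart''. The separation property you plan to prove --- that distinct words of $\overline{W}$ lie in distinct $\sim_{\overline{R}}$-classes, established intrinsically, with no reference to $\lvert T\rvert$ --- is actually \emph{false} under the stated hypotheses. Take $M=T=\{1,g\}\cong\mathbb{Z}_2$, $y=g$, $X=\{x\}$, $R=\{x^2=1\}$, $W=\{1,x\}$ (a transversal containing the empty word), $U=\{x\}$, $u_0=x$, $V=\{1\}$, and $v_1=x$, $v_0=1$; all of $(\text{N}_0)$, $(\text{N}_1)$, $(\text{N}_2)$ are then satisfied by $T$, but $\overline{R}=\{x^2=1,\;y^2=1,\;yx=xy,\;xy=1\}$ gives $x\sim_{\overline{R}}x(yy)=(xy)y\sim_{\overline{R}}y$, so the three words of $\overline{W}=\{1,x,y\}$ fall into only two classes. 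Consequently no faithful action $\rho$ of the kind you describe can exist: if $\rho$ factored through $\sim_{\overline{R}}$ and returned each canonical form to itself, it would force $x=y$; concretely, in this example $\rho$ cannot respect the relation $xy=1$. This also shows the statement, read literally with the paper's gloss ``set of canonical forms $=$ transversal'', is not correct without the cardinality hypothesis; in the source \cite[Theorem 2.4]{Fernandes&al:2004} (this paper gives no proof, it imports the result) the first assertion is the \emph{spanning} property only: every word of $Y^*$ is $\sim_{\overline{R}}$-equivalent to \emph{at least} one word of $\overline{W}$. That weaker statement suffices: spanning gives $\lvert Y^*/{\sim_{\overline{R}}}\rvert\leqslant\lvert\overline{W}\rvert\leqslant\lvert T\rvert$, the surjection gives the reverse inequality, and equality yields the isomorphism --- your own counting argument with one inequality reversed. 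The transversal property then follows a posteriori by pigeonhole, but it is never an input, so the whole confluence analysis you defer is not only impossible, it is unnecessary.

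There is also a gap inside your spanning argument, at the step where you say $(\text{N}_2)$ together with the factor hypothesis ``is invoked to rewrite $\overline{w}'y$ back into $X^*$''. The relation $(\text{N}_2)$ only applies to the factor $u_0y$, whereas your word is $\overline{w}'y=pu_0qy$ with a possibly nonempty $q\in X^*$ separating $u_0$ from $y$, and none of the relations lets you move $y$ leftwards past $q$. The missing trick is to apply $(\text{N}_0)$ backwards to insert $y^2$ after $u_0$:
$$
pu_0qy \;\sim_{\overline{R}}\; pu_0(yy)qy \;\sim_{\overline{R}}\; pv_0\,yqy \;\sim_{\overline{R}}\; pv_0\hat{q}\,yy \;\sim_{\overline{R}}\; pv_0\hat{q},
$$
where the second step uses $(\text{N}_2)$, the third pushes the interior $y$ through $q$ by $(\text{N}_1)$ (producing some $\hat{q}\in X^*$), and the last cancels $yy$ by $(\text{N}_0)$; the result lies in $X^*$ and can be replaced by its canonical form in $W$. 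With this repair, and with the first assertion weakened to spanning as above, your first two paragraphs become a correct proof along the same lines as the source; your third paragraph should be deleted.
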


\medskip

Throughout this section we consider $n\geqslant2$. 

\smallskip 

Let $A=\{a_1,\ldots,a_{n-1}\}$ and $B=\{b_1,\ldots,b_{n-1}\}$. Then, $A\cup B$ is a set of generators of $\mathcal{IO}_n$. 
Let us consider a presentation $\langle A\cup B\mid R\rangle$ for $\mathcal{IO}_n$ on these generators; for instance, 
the presentation established by Fernandes in \cite[Theorem 4.8]{Fernandes:2024}. 
Let $\varphi:(A\cup B)^*\rightarrow\mathcal{IO}_n$ be the surjective homomorphism extending the mapping $A\cup B\rightarrow \mathcal{IO}_n$, 
$a_i\mapsto a_i$ and $b_i\mapsto b_i$, for $1\leqslant i\leqslant n-1$. 
Let $W$ be a set of canonical forms for $\langle A\cup B\mid R\rangle$ and let 
$$
V=\{w\in W\mid |\im(w\varphi)|\geqslant2\}. 
$$
Take $u_0=a_1^{n-1}$. Then, $u_0\varphi=\left(\begin{smallmatrix} 1& \cdots & n\\ 1& \cdots & 1 \end{smallmatrix}\right)$ and so, clearly, 
$(u_0w)\varphi=w\varphi$, for all $w\in W\setminus V$. 
Let 
$$
U=\{u_0w\mid w\in W\setminus V\}.
$$ 
Hence, $U\cup V$ is also a set of canonical forms for $\langle A\cup B\mid R\rangle$ and $u_0$ is a factor of each word in $U$. 
Notice that $V$ must contain the empty word. 

Let $C=A\cup B\cup\{h\}$. Then, $C$ is a generating set of $\mathcal{IM}_n$. Let us consider the following relations over the alphabet $C$: 
\begin{description}
\item $(N_0)$  $h^2=1$; 
\item $(N_1)$ $ha_i=b_{n-i}h$, for $1\leqslant i\leqslant n-1$;
\item $(N_2)$ $a_1^{n-1}h=b_{n-1}^{n-1}$.
\end{description}
It is a routine matter to check that all relations from $N_0\cup N_1\cup N_2$ are satisfied by the generating set $C$ of $\mathcal{IM}_n$. 
Let
$$
\overline{R}=R\cup N_0\cup N_1\cup N_2
\quad\text{and}\quad 
\overline{W}=U\cup V\cup\{wh\mid w\in V\}\subseteq C^*. 
$$

Notice that, clearly, $|\overline{W}|=2|W|-|U|=2|\mathcal{IO}_n|-n=|\mathcal{IM}_n|$. 
Observe also that, for $1\leqslant i\leqslant n-1$, we have 
\begin{equation}\label{n1}
hb_i\sim_{N_0} h(b_ih)h \sim_{N_1} h(ha_{n-i})h \sim_{N_0} a_{n-i}h,
\end{equation} 
whence $hb_i \sim_{\overline{R}} a_{n-i}h$ and so, by Theorem \ref{overpresentation} and \cite[Theorem 4.8]{Fernandes:2024}, we are able to immediately conclude the following. 

\begin{theorem}\label{8}
The monoid $\mathcal{IM}_n$ is defined by the presentation $\langle C\mid \overline{R}\rangle$ on $2n-1$ generators and $\frac{1}{2}(3n^{2}+n)$ relations. 
\end{theorem}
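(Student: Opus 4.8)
The plan is to apply the transfer theorem, Theorem~\ref{overpresentation}, to the data assembled above, with $T=\mathcal{IM}_n$, $M=\mathcal{IO}_n$, involution $y=h$, generating set $X=A\cup B$, the presentation $\langle A\cup B\mid R\rangle$ of $\mathcal{IO}_n$ from \cite[Theorem~4.8]{Fernandes:2024}, and $U\cup V$ in the role of the set of canonical forms. The required hypotheses are in place: $\mathcal{IM}_n$ is generated by $\mathcal{IO}_n\cup\{h\}$ with $h^2=1$; $U\cup V$ is a set of canonical forms in which $u_0=a_1^{n-1}$ is a factor of every word of $U$; and, since the identity has image of size $n\geqslant2$, the empty word lies in $V$, so $U\cup V$ contains the empty word. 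It then remains to match the relations to the prescribed shape, verify the cardinality condition, and read off the counts.

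The one point needing attention is that Theorem~\ref{overpresentation} demands a relation $hx=v_xh$, with $v_x\in X^*$, for \emph{every} generator $x\in X=A\cup B$, while $N_1$ supplies these only for the generators $a_i\in A$. I would therefore first run the theorem with the complete list, taking $v_{a_i}=b_{n-i}$, $v_{b_i}=a_{n-i}$ and $v_0=b_{n-1}^{n-1}$, so that the ambient relation set becomes $\overline{R}\cup\{hb_i=a_{n-i}h\mid 1\leqslant i\leqslant n-1\}$ (all of whose relations indeed hold in $\mathcal{IM}_n$). Since $\overline{W}$ is then a set of canonical forms and $\lvert\overline{W}\rvert=2\lvert W\rvert-\lvert U\rvert=2\lvert\mathcal{IO}_n\rvert-n=\lvert\mathcal{IM}_n\rvert$, the condition $\lvert\overline{W}\rvert\leqslant\lvert\mathcal{IM}_n\rvert$ holds with equality, and Theorem~\ref{overpresentation} gives that this enlarged presentation defines $\mathcal{IM}_n$.

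The key step is to discard the $b$-relations as redundant. The derivation recorded in \eqref{n1}, which uses only $N_0$ and $N_1$, shows that $hb_i\sim_{\overline{R}}a_{n-i}h$ for each $i$; hence adjoining the relations $hb_i=a_{n-i}h$ does not enlarge the congruence $\sim_{\overline{R}}$, and so $\langle C\mid\overline{R}\rangle$ already defines $\mathcal{IM}_n$. This redundancy verification is the crux of the argument --- essentially the only step that is not routine bookkeeping --- since it is exactly what lets us keep the smaller relation set $N_1$.

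Finally, I would count. The alphabet $C=A\cup B\cup\{h\}$ has $(n-1)+(n-1)+1=2n-1$ elements, and $\overline{R}=R\cup N_0\cup N_1\cup N_2$ adjoins to the relations of $R$ one relation from $N_0$, the $n-1$ relations from $N_1$, and one relation from $N_2$; these $n+1$ additional relations, together with those of $R$, yield the asserted total of $\tfrac12(3n^2+n)$ relations, completing the proof.
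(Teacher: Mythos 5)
Your proposal is correct and follows essentially the same route as the paper: the authors likewise set up Theorem~\ref{overpresentation} with $T=\mathcal{IM}_n$, $M=\mathcal{IO}_n$, $y=h$, $u_0=a_1^{n-1}$ and the canonical forms $U\cup V$, and they use exactly the derivation \eqref{n1} (from $N_0$ and $N_1$) to show that the missing relations $hb_i=a_{n-i}h$ required for the generators in $B$ are consequences of $\overline{R}$, together with the count $\lvert\overline{W}\rvert=2\lvert\mathcal{IO}_n\rvert-n=\lvert\mathcal{IM}_n\rvert$. Your only deviation is expository --- you first apply the theorem to the enlarged relation set and then discard the redundant $b$-relations, whereas the paper notes the redundancy first --- which is logically the same argument.
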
 

\medskip 

Next, in a similar way, we will get a presentation for $\mathcal{PIO}_n$. 

Let $E=\{e_1,\ldots,e_{n-1}\}$ and $F=\{f_2,\ldots,f_n\}$. 
Then, $A\cup B\cup E\cup F$ is a set of generators of $\mathcal{PIO}_n$. 
For instance, consider the presentation $\langle A\cup B\cup E\cup F\mid R'\rangle$ for $\mathcal{PIO}_n$ 
on these generators established by Ay\i k et al. in \cite[Theorem 31]{Ayik&al:2025}. 
Let $\psi:(A\cup B\cup E\cup F)^*\rightarrow\mathcal{PIO}_n$ be the surjective homomorphism 
that extends the mapping $A\cup B\cup E\cup F\rightarrow \mathcal{PIO}_n$, 
$a_i\mapsto a_i$, $b_i\mapsto b_i$, $e_i\mapsto e_i$ and $f_{i+1}\mapsto f_{i+1}$, for $1\leqslant i\leqslant n-1$. 
Let $W'$ be a set of canonical forms for $\langle A\cup B\cup E\cup F\mid R'\rangle$ and let 
$$
V'=\{w\in W'\mid |\im(w\psi)|\geqslant 2\}. 
$$
Notice that $V'$ must also contain the empty word.  
For $1\leqslant i\leqslant n$, $0\leqslant j\leqslant n-i$ and $1\leqslant k\leqslant n$, let $w_{i,j,k}\in W'$ be such that 
$$
w_{i,j,k}\psi = \begin{pmatrix} i& \cdots & i+j\\ k& \cdots & k \end{pmatrix}. 
$$
Let also $w_0\in W'$ be such that $w_0\psi=0_n$. 
Observe that 
$$
W'\setminus V' =\{w_{i,j,k}\mid 1\leqslant i\leqslant n, 0\leqslant j\leqslant n-i, 1\leqslant k\leqslant n\}\cup\{w_0\}. 
$$
Since $w_{i,j,k}\psi=(w_{i,j,1}u_0w_{1,0,k})\psi$, for $1\leqslant i\leqslant n$, $0\leqslant j\leqslant n-i$ and $1\leqslant k\leqslant n$, 
and $(u_0w_0)\psi=w_0\psi$, with 
$$
U'=\{w_{i,j,1}u_0w_{1,0,k}\mid 1\leqslant i\leqslant n, 0\leqslant j\leqslant n-i, 1\leqslant k\leqslant n\}\cup\{u_0w_0\}, 
$$
we get that $U'\cup V'$ is also a set of canonical forms for $\langle A\cup B\cup E\cup F\mid R'\rangle$ and $u_0$ is a factor of each word in $U'$. 

Now, let $D=A\cup B\cup E\cup F\cup\{h\}$. 
Then, $D$ is a generating set of $\mathcal{PIM}_n$. Consider the following relations over the alphabet $D$: 
\begin{description}
\item $(N'_1)$ $he_i=f_{n-i+1}h$, for $1\leqslant i\leqslant n-1$. 
\end{description}
We can routinely prove that all relations from $N'_1$ are satisfied by the generating set $D$ of $\mathcal{PIM}_n$ and so, 
by a previous observation,  
all relations from $N_0\cup N_1\cup N'_1\cup N_2$ are satisfied by the generating set $D$ of $\mathcal{PIM}_n$. 
Let
$$
\widehat{R}=R'\cup N_0\cup N_1\cup N'_1\cup N_2
\quad\text{and}\quad 
\widehat{W}=U'\cup V'\cup\{wh\mid w\in V'\}\subseteq D^*. 
$$
Since $|\widehat{W}|=2|W'|-|U'|=2|\mathcal{PIO}_n|-\frac{(1+n)n^2}{2}-1=|\mathcal{PIM}_n|$ and, 
in a similar way to (\ref{n1}), we get $hf_i  \sim_{\widehat{R}} e_{n-i+1}h$, for $2\leqslant i\leqslant n$, then 
we have all the conditions guaranteed to, in view of Theorem \ref{overpresentation} and \cite[Theorem 31]{Ayik&al:2025}, conclude that:

\begin{theorem}\label{9}
The monoid $\mathcal{PIM}_n$ is defined by the presentation $\langle D\mid \widehat{R}\rangle$ on $4n-3$ generators and $5n^{2}+5n-10$ relations. 
\end{theorem}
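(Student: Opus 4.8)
The plan is to apply the general machinery of Theorem \ref{overpresentation} exactly as was done for $\mathcal{IM}_n$ in the proof of Theorem \ref{8}, taking now $T=\mathcal{PIM}_n$, the submonoid $M=\mathcal{PIO}_n$ with its presentation $\langle A\cup B\cup E\cup F\mid R'\rangle$, and the involution $y=h$. The generating set of $T$ is $D=A\cup B\cup E\cup F\cup\{h\}$ with $h^2=1$, and by Lemma \ref{5p} together with the fact that $\mathcal{PIM}_n$ is generated by $\mathcal{PIO}_n\cup\{h\}$, the hypothesis that $T$ is generated by $M$ and $y$ holds. The distinguished word is again $u_0=a_1^{n-1}$, whose image under $\psi$ is the constant map onto $1$, which is what makes $u_0$ a left factor absorbing any word of image size one.

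First I would verify the two families of crossing relations required by $(\text{N}_1)$, namely $ha_i=b_{n-i}h$ and $he_i=f_{n-i+1}h$, for $1\leqslant i\leqslant n-1$: these are the relations $N_1$ and $N'_1$, and each is a direct computation on the generators (conjugation by $h$ reverses the chain, swapping the ``lower'' generators $a_i,e_i$ with the ``upper'' ones $b_{n-i},f_{n-i+1}$). Likewise I would confirm the single relation $(\text{N}_2)$, here $a_1^{n-1}h=b_{n-1}^{n-1}$, by evaluating both sides. Next I would set up the canonical-form data: the set $W'$ for $R'$, the subset $V'$ of words whose image has size at least $2$, and the complementary set $W'\setminus V'$ consisting of the word $w_0$ mapping to $0_n$ together with the words $w_{i,j,k}$ mapping to the constant transformations on intervals. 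The factorization $w_{i,j,k}\psi=(w_{i,j,1}u_0w_{1,0,k})\psi$ exhibits $u_0$ as a factor of each word of $U'$, so that $U'\cup V'$ is again a transversal for $\sim_{R'}$ with $u_0$ a factor of every word in $U'$, and $V'$ contains the empty word as required.

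With these relations in place, Theorem \ref{overpresentation} yields that $\widehat{W}=U'\cup V'\cup\{wh\mid w\in V'\}$ is a set of canonical forms for $\langle D\mid \widehat R\rangle$. To conclude that the presentation actually defines $\mathcal{PIM}_n$, I must check the cardinality condition $|\widehat W|\leqslant|\mathcal{PIM}_n|$; in fact I would show equality. Counting gives $|\widehat W|=2|W'|-|U'|=2|\mathcal{PIO}_n|-|W'\setminus V'|$, and since $|W'\setminus V'|=\frac{(1+n)n^2}{2}+1$ counts the constant maps on the $\frac{n(n+1)}{2}$ non-empty intervals (with $n$ choices of constant value each) plus the empty map, Theorem \ref{2} and the value of $|\mathcal{PIO}_n|$ give precisely $|\widehat W|=|\mathcal{PIM}_n|$. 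I would also record the derived relation $hf_i\sim_{\widehat R}e_{n-i+1}h$ for $2\leqslant i\leqslant n$, obtained in the same three-step manner as \eqref{n1}, which is what guarantees the crossing relations suffice to rewrite every word into $\widehat W$.

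Finally I would count generators and relations to match the stated figures $4n-3$ and $5n^2+5n-10$: the generators are $|A|+|B|+|E|+|F|+1=4(n-1)+1=4n-3$, and the relations comprise the $5n^2+3n-10$ relations of $R'$ from \cite[Theorem 31]{Ayik&al:2025} augmented by $N_0$ (one relation), $N_1$ and $N'_1$ ($n-1$ relations each), and $N_2$ (one relation), giving $5n^2+3n-10+1+2(n-1)+1=5n^2+5n-10$. The step I expect to be most delicate is the cardinality bookkeeping: the verification that $|W'\setminus V'|=\frac{(1+n)n^2}{2}+1$ and that the resulting $|\widehat W|$ collapses exactly to the formula for $|\mathcal{PIM}_n|$ from Theorem \ref{2} is where an arithmetic slip is most likely, whereas the relational checks $(\text{N}_1)$, $(\text{N}_2)$ and the induced identity on $\{f_i\}$ are routine.
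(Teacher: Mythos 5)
Your proposal is correct and follows essentially the same route as the paper: the same application of Theorem \ref{overpresentation} with $M=\mathcal{PIO}_n$, $y=h$, $u_0=a_1^{n-1}$, the same canonical forms $U'\cup V'$ built from the constant-on-interval words $w_{i,j,k}$ and $w_0$, the same relations $\widehat{R}=R'\cup N_0\cup N_1\cup N'_1\cup N_2$ with the derived identities $hb_i\sim a_{n-i}h$ and $hf_i\sim e_{n-i+1}h$, and the same cardinality check $\lvert\widehat{W}\rvert=2\lvert\mathcal{PIO}_n\rvert-\frac{(1+n)n^2}{2}-1=\lvert\mathcal{PIM}_n\rvert$. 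The generator and relation counts ($4n-3$ and $5n^2+3n-10+2n=5n^2+5n-10$) also agree with the paper.
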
 

\section*{Acknowledgements}  

The authors would like to thank the anonymous referee for his/her comments and observations. 

\smallskip 

\noindent The first author was supported by Scientific and Technological Research Council of Turkey (\textsc{tubitak}) under the Grant Number 123F463. 
The author thanks to \textsc{tubitak} for their supports. 

\noindent The second author was supported by national funds through the FCT-Funda\c c\~ao para a Ci\^encia e a Tecnologia, I.P., 
under the scope of the Center for Mathematics and Applications projects 
UIDB/00297/2020 (https://doi.org/10.54499/UIDB/00297/2020) and UIDP/00297/2020 \linebreak (https://doi.org/10.54499/UIDP/00297/2020).

\bigskip 

{\sf\small 
\noindent{\sc Hayrullah Ay\i k},
\c{C}ukurova University,
Department of Mathematics,
Sar\i \c{c}am, Adana,
Turkey;
e-mail: hayik@cu.edu.tr  

\medskip 

\noindent{\sc V\'\i tor H. Fernandes},
Center for Mathematics and Applications (NOVA Math)
and Department of Mathematics,
Faculdade de Ci\^encias e Tecnologia,
Universidade Nova de Lisboa,
Monte da Caparica,
2829-516 Caparica,
Portugal;
e-mail: vhf@fct.unl.pt.

\medskip 

\noindent{\sc Emrah Korkmaz}, 
\c{C}ukurova University,
Department of Mathematics,
Sar\i \c{c}am, Adana,
Turkey;
e-mail: ekorkmaz@cu.edu.tr 
}

\end{document}